\documentclass[final,3p, times, 12pt]{elsarticle}
\usepackage[centertags]{amsmath}
\usepackage{amsfonts}
\usepackage{amssymb}
\usepackage{amsthm}

\usepackage{graphicx}
\usepackage{multicol}

\theoremstyle{plain}
\newtheorem{thm}{Theorem}[section]
\newtheorem{cor}[thm]{Corollary}
\newtheorem{lem}[thm]{Lemma}
\newtheorem{prop}[thm]{Proposition}

\theoremstyle{definition}

\newtheorem{exam}[thm]{Example}

\theoremstyle{remark}
\numberwithin{equation}{section}

\begin{document}

\begin{frontmatter}

\title{An extending result on\\
spectral radius of bipartite graphs}
\author[NCTU]{Yen-Jen Cheng\corref{cor}}\ead{yjc7755.am01g@nctu.edu.tw}
\author[HIT]{Feng-lei Fan}\ead{fanfenglei@hit.edu.cn}
\author[NCTU]{Chih-wen Weng}\ead{weng@math.nctu.edu.tw}

\cortext[cor]{Corresponding author}

\address[HIT]{Department of Photonics, Harbin Institute of Technology, 92 Xidazhi Street, Harbin, China}
\address[NCTU]{Department of Applied Mathematics, National Chiao Tung University, 1001 Ta Hsueh Road, Hsinchu, Taiwan.}

\begin{abstract}
Let $G$ denote a bipartite graph with $e$ edges without isolated vertices.
It was known  that the spectral radius  of $G$ is at most the square root of $e$,
and the upper bound is attained if and only if $G$ is a complete bipartite graph.
Suppose that $G$ is not a complete bipartite graph, and $e-1$ and $e+1$ are not twin  primes.
We determine the maximal spectral radius of $G$.
As a byproduct of our study, we obtain a spectral characterization of a pair $(e-1, e+1)$ of integers
to be a pair of twin primes.
\end{abstract}

\begin{keyword}
bipartite graph\sep  spectral radius \sep twin primes

\MSC[2010]05C50\sep 05E30\sep 15A42
\end{keyword}
\bigskip

\end{frontmatter}

\section{Introduction}

Let $G$ denote a bipartite graph with $e$ edges without isolated vertices.
The {\it spectral radius} of $G$ is the largest eigenvalue of the adjacency matrix of G.
It was shown in \cite[Proposition 2.1]{Bha} that the spectral radius $\rho(G)$ of $G$ satisfies
$\rho(G)\leq \sqrt{e},$
with equality if and only if $G$ is a complete bipartite graph.
There are several extending results of the above result, which aim to solve an analog of the Brualdi-Hoffman conjecture for nonbipartite graphs \cite{Hoff},  proposed in \cite{Bha}.
These extending results are scattered in \cite{Bha, cfksw10, Liu}.
To provide another extending result, we need some notations.
For $2\leq s\leq t$, let $K^-_{s, t}$ denote the graph obtained from the complete bipartite graph $K_{s, t}$ of bipartition orders $s$ and $t$ by deleting an edge, and $K_{s, t}^+$ denote the graph obtained
from $K_{s, t}$  by adding a new edge $xy$, where $x$ is a new vertex and $y$ is a vertex in the part of order $s$. Note that $K_{2, t+1}^-=K^+_{2, t},$ and $K^-_{s, t}$ and $K^+_{s, t}$ are not complete bipartite graphs.
For $e\geq 2$, let $\rho(e)$ denote the maximal value $\rho(G)$ of a bipartite graph $G$ with $e$ edges which is not a union of a complete bipartite graph and some isolated vertices if any.
For the case $(e-1, e+1)$ is not a pair of {\it twin primes}, i.e., a pair of primes with difference two,
we will determine the bipartite graph $G$ with $e$ edges such that $\rho(G)=\rho(e)$. Indeed we will show in Theorem~\ref{main1} that if $e\geq 4$ and $\rho(G)=\rho(e)$ then $G\in \{K^-_{s', t'}, K^+_{s'', t''}\},$ where
$s'$ and $t'$ (resp. $s''$ and $t''$) are chosen to minimize $s$ subject to $2\leq s\leq t$ and $e=st-1$ (resp. $e=st+1$).
The case that $(e-1, e+1)$ is a pair of twin primes is not completely solved, nevertheless we find that the shape of $\rho(e)$ in this case is lower than usual, and indeed this property characterizes a pair of  twin primes.
See Theorem~\ref{main2} for the detailed description.

\section{Preliminaries}

Let $D=(d_1, d_2, \ldots, d_p)$ be a sequence of nonincreasing positive integers of length $p$.
Let $G_D$ denote the bipartite graph with bipartition $X\cup Y$, where
$X=\{x_1, x_2, \ldots, x_p\}$ and $Y=\{y_1, y_2, \ldots, y_q\}$ ($q=d_1$),
and $x_iy_j$ is an edge if and only if $j\leq d_i.$
Note that $D$ is the degree sequence of the part $X$ in the bipartition $X\cup Y$ of $G_D.$
As $e=d_1+d_2+\cdots+d_p,$ $D$ is a {\it partition} of the number $e$ of edges in $G_D.$
The degree sequence $D^*=(d^*_1, d^*_2, \ldots, d^*_q)$ of the other part $Y$ forms the {\it conjugate partition} of $e$ as  $e=d^*_1+d^*_2+\cdots+d^*_q$, and $d^*_j=|\{i~|~d_i\geq j\}|.$  See \cite[Section 8.3]{b12} for details. The sequence $D$ will define a {\it Ferrers diagram} of $1$'s that has $p$ rows with $d_i$ $1$'s in row $i$ for $1\leq i\leq p$. For example the Ferrers diagram $F(D)$ of the sequence
 $D=(4, 2, 2, 1, 1)$ is in Figure~1. One can check that $D^*=(5, 3, 1, 1)$ in the above example.
 \bigskip

$$F(D)= \begin{tabular}{cccc}
   1 & 1 & 1 & 1 \\
   1 & 1 &   &  \\
   1 & 1 &  &   \\
   1 &  &  &  \\
   1 &  &  &  \\
 \end{tabular}$$
 \smallskip

\begin{center}
{\bf Figure 1.} The Ferrers diagram $F(D)$ of $D=(4, 2, 2, 1, 1).$
\end{center}
\bigskip

 The graph $G_D$ is important in the study of the spectral radius
of bipartite graphs with prescribed degree sequence of one part of the bipartition.

\begin{lem}(\cite[Theorem 3.1]{Bha})\label{lemD}
 Let $G$ be a bipartite graph without isolated vertices such that one part in the bipartition of $G$ has degree sequence $D=(d_1,\ldots,d_p)$. Then $\rho(G)\le\rho(G_D)$ with  equality if and only if $G=G_D$ (up to isomorphism).
\end{lem}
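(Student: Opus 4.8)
My plan is to establish the bound by an eigenvector-guided edge switch and to read off the equality case from the same computation. Let $A=A(G)$, let $\mathbf f\ge\mathbf 0$ be a Perron eigenvector of $A$ with $\mathbf f^{\top}\mathbf f=1$, and write $f_v$ for its entry at a vertex $v$. Since $G$ is bipartite its spectrum is symmetric about $0$, so $\rho(G)$ is the largest eigenvalue and $\rho(G)=\mathbf f^{\top}A\,\mathbf f$, while for any graph $H$ on the same vertex set the Rayleigh principle gives $\rho(H)\ge \mathbf f^{\top}A(H)\,\mathbf f$. I will use that $G_D$ is a \emph{difference graph}: listing $Y=\{y_1,\dots,y_q\}$ so that $N(x_1)\supseteq N(x_2)\supseteq\cdots\supseteq N(x_p)$ with each $N(x_i)=\{y_1,\dots,y_{d_i}\}$ an initial segment, equivalently its $p\times q$ biadjacency matrix is a Ferrers shape; and that, up to isomorphism, a bipartite graph has this form precisely when it contains no induced $2K_2$.

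First I would record the basic switch. Order $X$ and $Y$ by nonincreasing Perron weight, so that $f_{x_1}\ge\cdots\ge f_{x_p}$ and $f_{y_1}\ge\cdots\ge f_{y_q}$, and suppose there are indices $a<b$ and $c<d$ with $x_ay_d,\,x_by_c\in E(G)$ and $x_ay_c,\,x_by_d\notin E(G)$; call this a \emph{defect}. Let $G'$ be obtained by deleting $x_ay_d,x_by_c$ and inserting $x_ay_c,x_by_d$. This $2$-switch preserves every degree, in particular the $X$-degree sequence $D$, and a one-line expansion gives
\[
 \mathbf f^{\top}A(G')\,\mathbf f-\mathbf f^{\top}A(G)\,\mathbf f=2\,(f_{x_a}-f_{x_b})(f_{y_c}-f_{y_d})\ge 0,
\]
because the chosen orderings force $f_{x_a}\ge f_{x_b}$ and $f_{y_c}\ge f_{y_d}$. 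Hence $\rho(G')\ge \mathbf f^{\top}A(G')\,\mathbf f\ge\rho(G)$.

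The core step is to iterate. I would show that as long as $G$ is not a Ferrers/difference graph in the Perron orderings, a defect exists, so a switch as above applies; and that each switch strictly increases the bounded potential $\Phi=\sum_{x_iy_j\in E} ij$, since $\Phi(G')-\Phi(G)=(a-b)(c-d)>0$. Thus the process terminates at a defect-free graph $G^{\ast}$ with the same $X$-degree sequence and $\rho(G^{\ast})\ge\rho(G)$, and one checks that a defect-free sorted biadjacency matrix is northwest-justified, i.e.\ a Ferrers shape; since in a difference graph the Perron order and the degree order agree, $G^{\ast}\cong G_D$, giving $\rho(G)\le\rho(G_D)$. I expect the main obstacle to lie exactly here: guaranteeing that whenever $G\not\cong G_D$ one can find a defect that is simultaneously sign-correct (so $\rho$ does not drop) and progress-making (so $\Phi$ rises toward the Ferrers shape), which is the delicate point where the ordering by eigenvector weight must be reconciled with the combinatorial left-justification.

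Finally, for the equality statement I would run the same chain $G=G_0,\dots,G_m=G_D$ under the hypothesis $\rho(G)=\rho(G_D)$, forcing equality at every switch. At the first switch, equality makes $\mathbf f$ a Perron eigenvector of $G_1$ as well and forces $(f_{x_a}-f_{x_b})(f_{y_c}-f_{y_d})=0$; comparing the eigen-equations $\rho f_v=\sum_{u\sim v}f_u$ for $G_0$ and $G_1$ at the four switched vertices then pins down the moved weights and shows the switch changed nothing essential, and induction gives $G\cong G_D$. To make the ``zero factor'' mean a genuine symmetry rather than a vanishing coordinate, I would first note that $G_D$ is connected (its $x_1$ is adjacent to every $y_j$) and reduce the strict comparison to the connected case, so that Perron--Frobenius supplies a strictly positive $\mathbf f$; then each weight-changing switch strictly increases $\rho$, and the strict inequality for every $G\not\cong G_D$, hence the uniqueness of the extremal graph, follows.
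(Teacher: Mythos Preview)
Your $2$-switch argument has a structural gap that cannot be repaired by adjusting the potential or the ordering: a bipartite $2$-switch (delete $x_ay_d,x_by_c$, add $x_ay_c,x_by_d$) preserves the degree sequence on \emph{both} sides of the bipartition, whereas $G_D$ has a specific $Y$-degree sequence, namely the conjugate partition $D^*$. If $G$ has $X$-degree sequence $D$ but $Y$-degree sequence different from $D^*$, no chain of $2$-switches can ever reach $G_D$. Concretely, take $D=(3,1,1)$ and let $G$ have edges $x_1y_1,x_1y_2,x_1y_3,x_2y_1,x_3y_2$; then $G$ is connected with $Y$-degrees $(2,2,1)$, while $G_D$ has $Y$-degrees $D^*=(3,1,1)$. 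In the Perron ordering one finds $f_{x_2}=f_{x_3}$ and $f_{y_1}=f_{y_2}$, the only induced $2K_2$ sits at rows $2,3$ and columns $1,2$ with pattern $\bigl(\begin{smallmatrix}1&0\\0&1\end{smallmatrix}\bigr)$, which is \emph{not} your ``defect'' orientation $\bigl(\begin{smallmatrix}0&1\\1&0\end{smallmatrix}\bigr)$; switching it merely returns an isomorphic copy of $G$. So your termination claim (``defect-free in the Perron order implies Ferrers shape'') is false, and the iteration stalls at graphs not isomorphic to $G_D$.

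The paper's sketch (the paragraph following the lemma) avoids this by using a \emph{single-edge} move rather than a $2$-switch: order only $Y$ by Perron weight, and whenever $x_ky_j\in E(G)$ but $x_ky_i\notin E(G)$ for some $i<j$, replace the edge $x_ky_j$ by $x_ky_i$. The Rayleigh gain is $2u_k(v_i-v_j)\ge 0$, the $X$-degree sequence $D$ is preserved, but the $Y$-degrees are allowed to change, which is precisely what is needed to compress every row of the biadjacency matrix to an initial segment and land on $G_D$. If you want to salvage your write-up, replace the $2$-switch by this one-edge shift; your potential $\Phi$ and your equality analysis then go through essentially unchanged.
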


The idea of the proof of Lemma~\ref{lemD} may be traced back to \cite{s64}.
Let $(u_1,u_2,\ldots,u_p;v_1,v_2,\ldots,v_q)$ be a positive Perron eigenvector of the adjacency matrix of $G$, where vertices in the part $Y$ of bipartition $X\cup Y$ of $G$ are ordered to ensure $v_1\geq v_2\geq\cdots\geq v_q$, i.e., the latter part of the positive Perron eigenvector nonincreasing. For $1\leq i<j\leq q,$ if $x_ky_{j}$ is an edge and $x_ky_i$ is not an edge in $G$ for some $x_k\in X$, then the new bipartite graph $G'$ with the same vertex set as $G$ obtained by deleting the edge $x_ky_{j}$ and adding a new edge $x_ky_i$ has spectral radius $\rho(G')\geq \rho(G)$.

A bipartite graph $G$ is {\it biregular} if the degrees of vertices in the same part of its bipartition are the same constant. Let $H$, $H'$ be two bipartite graphs with given ordered bipartitions $VH = X\bigcup Y$ and $VH' = X'\bigcup Y'$, where $VH\bigcap VH' = \phi$. The {\it bipartite sum} $H + H'$ of $H$ and
$H'$ (with respect to the given ordered bipartitions) is the graph obtained from $H$ and
$H'$ by adding an edge between $x$ and $y$ for each pair $(x,y) \in X \times Y'\bigcup   X' \times Y$.
Chia-an Liu and the third author  \cite{Liu} found upper bounds of $\rho(G)$ expressed by degree sequences of two parts of the bipartition of $G$.

\begin{lem}(\cite{Liu})
\label{lemphi}
Let $G$ be a bipartite graph with bipartition $X\cup Y$ of orders $p$ and $q$ respectively such that
the part $X$ has degree sequence $D=(d_1,\ldots,d_p)$, and the other part $Y$ has degree sequence $D'=(d'_1,d'_2\ldots, d'_q)$, both in nonincreasing order.
For $1\le s\le p$ and $1\le t\le q$, let $X_{s,t}=d_sd'_t+\sum_{i=1}^{s-1}{(d_i-d_s)}+\sum_{j=1}^{t-1}{(d'_j-d'_t)}$, $Y_{s,t}=\sum_{i=1}^{s-1}{(d_i-d_s)}\cdot\sum_{j=1}^{t-1}{(d'_j-d'_t)}$.
Then
$$\rho(G)\le\phi_{s,t}:=\sqrt{\frac{X_{s,t}+\sqrt{X_{s,t}^2-4Y_{s,t}}}{2}}.$$
Furthermore, if $G$ is connected then the above equality holds if and only if there exist
nonnegative integers $s' < s$ and $t' < t$, and a biregular graph $H$ of bipartition orders
$p-s'$ and $q-t'$ respectively such that $G = K_{s',t'} + H$.
\end{lem}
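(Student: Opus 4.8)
The plan is to work with a positive Perron eigenvector $(u_1,\dots,u_p;v_1,\dots,v_q)$ of the adjacency matrix of $G$, so that $\rho u_i=\sum_{j\sim i}v_j$ and $\rho v_j=\sum_{i\sim j}u_i$ for all $i,j$, where $\rho=\rho(G)$. Writing $S=\sum_{i=1}^{s-1}(d_i-d_s)$ and $T=\sum_{j=1}^{t-1}(d'_j-d'_t)$, one checks that $\phi_{s,t}^2$ is the larger root of the upward parabola $f(z)=z^2-X_{s,t}z+Y_{s,t}$, and that $f(z)=(z-S)(z-T)-d_sd'_t\,z$. Since $f\le 0$ exactly between its two roots, the desired inequality $\rho(G)\le\phi_{s,t}$ is equivalent to the single scalar inequality
$$(\rho^2-S)(\rho^2-T)\le d_sd'_t\,\rho^2,$$
and this is what I would aim to establish. (Reality of $\phi_{s,t}$ is automatic, since $X_{s,t}^2-4Y_{s,t}\ge(S-T)^2\ge 0$.)

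The engine producing $S$ and $T$ is an elementary ``heavy neighbours first'' estimate valid for every vertex irrespective of the ordering of the eigenvector: as $d'_1\ge\cdots\ge d'_q$, the degrees of the $d_i$ neighbours of $x_i$ sum to at most the $d_i$ largest values among the $d'_j$, whence $\sum_{j\sim i}d'_j\le d_id'_t+T$, and symmetrically $\sum_{i\sim j}d_i\le d'_jd_s+S$. Passing to $BB^{\mathsf T}$ and $B^{\mathsf T}B$, where $B$ is the biadjacency matrix, these become bounds on the row sums of the two matrices whose common Perron root is $\rho^2$. I would then split $X$ into its top $s-1$ vertices and the rest, split $Y$ into its top $t-1$ vertices and the rest, and push the block maxima of $u$ through $\rho v=\dots$ and back through $\rho u=\dots$; composing the two steps, the excess sums $S,T$ arise as the low-block contributions and $d_s,d'_t$ as the diagonal ones, assembling into a $2\times2$ nonnegative product matrix $\mathcal M$ that dominates the induced two-step action and satisfies $\operatorname{tr}\mathcal M=X_{s,t}$ and $\det\mathcal M=Y_{s,t}$; hence $\rho^2\le\rho(\mathcal M)=\phi_{s,t}^2$. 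I have verified that these are exactly the trace and determinant obtained from the equitable quotient of the conjectured extremal graph, which is the sanity check that the bookkeeping is correct.

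The main obstacle is that the Perron eigenvector need not be monotone in the degree order, so the block maxima on the two sides must be matched to the degree blocks with care; this is where the rearrangement principle recorded after Lemma~\ref{lemD} (shifting an edge toward a larger eigenvector coordinate does not decrease $\rho$) is used to align the two orders, and where one easy case must be split off separately, namely the eigenvector maximum lying in the bottom block, which already yields $\rho^2\le d_sd'_t+T$, and $f(d_sd'_t+T)=-S\,d_sd'_t\le 0$ gives $d_sd'_t+T\le\phi_{s,t}^2$.

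For the equality statement, assuming $G$ connected, I would read the equality condition backwards through each inequality above. Tightness of the ``heavy neighbours first'' estimate forces every vertex of the top block on one side to be adjacent to all vertices realizing the relevant degrees; together with connectivity this pins the first $s'$ vertices of $X$ and the first $t'$ vertices of $Y$ (for some $s'<s$, $t'<t$, the strict inequalities coming from possible ties among the largest degrees) to be adjacent to everything, i.e.\ to form the $K_{s',t'}$ join part. Tightness of the block-maximum bounds simultaneously forces $u$ and $v$ to be constant on each remaining cell, which forces the graph $H$ left after removing the join to be biregular, giving $G=K_{s',t'}+H$; conversely such graphs meet the bound because their four-cell partition is equitable with quotient realizing $\phi_{s,t}$. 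I expect this equality analysis, rather than the inequality, to be the delicate part, since it requires saturating several independent estimates at once and invoking connectedness to rule out degenerate splittings.
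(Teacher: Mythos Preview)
The paper does not prove this lemma; it is quoted from \cite{Liu}, and the only proof information given is the sentence immediately following the statement: the argument there applies Perron--Frobenius to matrices obtained from the adjacency matrix by conjugating with diagonal matrices whose diagonal entries are free parameters. So there is no detailed proof in the present paper to compare against, only a one-line indication of technique; your proposal is an independent reconstruction along somewhat different lines.

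Your algebraic setup is correct: $\phi_{s,t}^2$ is the larger root of $f(z)=(z-S)(z-T)-d_sd'_t\,z$, the ``heavy neighbours first'' bound $\sum_{j\sim i}d'_j\le d_id'_t+T$ holds for every $i$, and targeting a $2\times2$ dominating matrix with trace $X_{s,t}$ and determinant $Y_{s,t}$ is the right shape. The easy case you split off (eigenvector maximum on $X$ lying in the bottom degree block, giving $\rho^2\le d_sd'_t+T\le\phi_{s,t}^2$) is also fine. But your handling of the remaining case is a genuine gap. The rearrangement principle recorded after Lemma~\ref{lemD} changes the graph: iterating it replaces $G$ by $G_D$, whose $Y$-side degree sequence is the conjugate partition $D^*$, not the original $D'$. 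You therefore cannot invoke it to ``align the two orders'' and still claim a bound involving $D'$; the rearrangement destroys exactly the $Y$-side data the lemma is about. The diagonal-conjugation method the paper points to sidesteps this entirely: one chooses diagonal weights taking one value on indices $<s$ (respectively $<t$) and another value on the rest, and bounds the maximum row sum of the conjugated matrix directly, so no alignment between the Perron vector and the degree order is ever needed. Your block-maxima route can be repaired as well, but by relabelling each side in eigenvector order and then arguing that the resulting scalar inequality is monotone in the degree multiset --- not by moving edges.
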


The idea of the proof in Lemma~\ref{lemphi} is to apply Perron-Frobenius Theorem for spectral radius to matrices that are similar to the adjacency matrix of $G$ by diagonal matrices with variables on diagonals. Results using this powerful method are also in \cite{cls13a,ctg13,hy14,hw14,lw13,sw04,xz13,xz14}.

\section{Graphs closed to $K_{p, q}$}

Applying Lemma~\ref{lemphi} to the graph $G=G_D$
 for a given sequence $D=(d_1, d_2, \ldots, d_p)$ of nonincreasing positive integers of length $p$, one immediately finds that $d'_j=d^*_j$ and
$$\sum_{j=1}^{t-1}{(d'_j-d'_t)}=\sum_{i=d'_{t}+1}^pd_i.$$
Moreover if $s$ is chosen such that $d_s<d_{s-1}$ and $t=d_{s}+1$, then $d'_t=s-1$ and the corresponding
Ferrers diagram $F(D)$ has a blank in the $(s,t)$ position, so
$$X_{s,t}=d_s(s-1)+\sum_{i=1}^{s-1}{(d_i-d_s)}+\sum_{i=s}^pd_i=e$$
and
\begin{equation}\label{eY}Y_{s,t}=\sum_{i=1}^{s-1}{(d_i-d_s)}\cdot \sum_{i=s}^pd_i,
\end{equation}
completely expressed by $D.$ Hence we have the following simpler form of Lemma~\ref{lemphi}.

\begin{lem}\label{pr1}
Assume that $s$ is chosen satisfying $d_s<d_{s-1}$ in the sequence $D=(d_1, d_2, \ldots, d_p)$ of positive integers and $e=d_1+d_2+\cdots+d_p$.
Then $$\rho(G_D)\le \sqrt{\frac{e+\sqrt{e^2-4 \sum_{i=1}^{s-1}{(d_i-d_s)}\cdot \sum_{i=s}^pd_i }}{2}},$$
with equality if and only if $D$ contains exactly two different values. \qed
\end{lem}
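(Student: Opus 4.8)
The plan is to read off both the inequality and the equality condition directly from Lemma~\ref{lemphi}, specialized to $G=G_D$ with the prescribed index $s$ (where $d_s<d_{s-1}$) and the companion index $t:=d_s+1$. The discussion preceding the statement already records that with this choice $d'_t=s-1$, the Ferrers diagram $F(D)$ has a blank in position $(s,t)$, and consequently $X_{s,t}=e$ together with $Y_{s,t}=\sum_{i=1}^{s-1}(d_i-d_s)\cdot\sum_{i=s}^p d_i$. Substituting these two values into $\phi_{s,t}=\sqrt{(X_{s,t}+\sqrt{X_{s,t}^2-4Y_{s,t}})/2}$ turns the bound of Lemma~\ref{lemphi} into exactly the displayed expression, so the inequality $\rho(G_D)\le\phi_{s,t}$ is immediate and needs no further computation.

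For the equality statement I would first check that $G_D$ is connected, so that the equality clause of Lemma~\ref{lemphi} is available: since $d_1=q$, the vertex $x_1$ is adjacent to every $y_j$ (as $j\le q=d_1$), while every $x_i$ has $d_i\ge 1$ and so is adjacent to $y_1$; hence all vertices lie in the component of the edge $x_1y_1$ and $G_D$ is connected. Lemma~\ref{lemphi} then asserts that equality holds if and only if $G_D=K_{s',t'}+H$ for some nonnegative $s'<s$, $t'<t$ and some biregular $H$ of bipartition orders $p-s'$ and $q-t'$. The remaining task, and the genuine content of the proof, is to show that the existence of such a decomposition is equivalent to $D$ taking exactly two distinct values.

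The forward implication I would settle by bookkeeping the degrees in the bipartite sum. Writing $A,B$ for the parts of $K_{s',t'}$ and $A',B'$ for the parts of $H$, every vertex of $A$ is joined to all of $Y$ and so has degree $q$, whereas every vertex of $A'$ is joined to all $t'$ vertices of $B$ and to its $H$-neighbours inside $B'$; biregularity of $H$ makes this $H$-degree a common constant $r$, so all vertices of $A'$ have degree $t'+r$. Thus the $X$-degree multiset $D$ is supported on $\{q,\,t'+r\}$, i.e.\ $D$ has at most two values, and the hypothesis $d_s<d_{s-1}$ excludes a constant $D$, forcing exactly two. For the converse, when $D$ has exactly two values the unique drop occurs at the prescribed $s$, so $d_1=\cdots=d_{s-1}=q$ and $d_s=\cdots=d_p=:b<q$; I would then display the decomposition explicitly, taking $A=\{x_1,\dots,x_{s-1}\}$ and $B=\{y_1,\dots,y_b\}$ and letting $H$ be the edgeless (hence biregular, degree-zero) bipartite graph on the remaining $p-s+1$ and $q-b$ vertices, checking that no edge joins $\{x_s,\dots,x_p\}$ to $\{y_{b+1},\dots,y_q\}$ and that $s'=s-1<s$ and $t'=b<b+1=t$. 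The main obstacle is exactly this equality bookkeeping—matching the abstract summand description $K_{s',t'}+H$ to the two-step staircase shape of $F(D)$ and confirming that a degree-zero biregular graph is a legitimate choice of $H$—since the inequality itself reduces to a single substitution.
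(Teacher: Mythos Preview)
Your proposal is correct and follows exactly the route the paper takes: the paper's own ``proof'' is the paragraph immediately before the lemma, which specializes Lemma~\ref{lemphi} to $G_D$ with $t=d_s+1$, records $X_{s,t}=e$ and $Y_{s,t}=\sum_{i=1}^{s-1}(d_i-d_s)\cdot\sum_{i=s}^p d_i$, and then simply puts a \qed. You have merely (and usefully) fleshed out the equality clause---the connectivity of $G_D$ and the translation between the $K_{s',t'}+H$ decomposition and the two-value condition on $D$---which the paper leaves entirely implicit.
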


The following are a few special cases.

\begin{exam}(\cite{Liu})\label{examp} Suppose that $2\leq p\leq q$ and $K_{p, q}^e$ (resp. ${^e}K_{p, q}$) is the graph
obtained from $K_{p, q}$ by deleting $k:=pq-e$ edges incident on a common vertex in the part of order $q$ (resp. $p$). Then
\begin{align*}
\rho(K_{p, q}^e)&=\sqrt{\frac{e+\sqrt{e^2-4k(q-1)(p-k)}}{2}}\qquad (k=pq-e<p),\\
\rho({^e}K_{p,q})&=\sqrt{\frac{e+\sqrt{e^2-4k(p-1)(q-k)}}{2}}\qquad (k=pq-e<q).
\end{align*}\qed
\end{exam}

Applying Example~\ref{examp} to the graph
$K^-_{p, q}=K^{pq-1}_{p, q}={^{pq-1}}K_{p, q}$, one immediate finds that
$$\rho(K^-_{p, q})= \sqrt{\frac{e+\sqrt{e^2-4(e-(p+q)+2)}}{2}},$$
which obtains maximum (resp. minimum) when $p$ is minimum (resp. $p$ is maximum) subject to the fixed number $e=pq-1$ of edges and $2\leq p\leq q$. Note that
$$e-(p+q)+2\leq e-2\sqrt{pq}+2=e-2\sqrt{e+1}+2<e-1-\sqrt{e-1}\qquad (e\geq 6).$$
Hence
$$\rho(K^-_{p, q})>\sqrt{\frac{e+\sqrt{e^2-4(e-1-\sqrt{e-1})}}{2}}\qquad (q\geq p\geq 3).$$
As $K^-_{2, 2}$ has $3$ edges, one can check that \begin{equation}\label{e=3}\rho (K^-_{2,2})=\sqrt{\frac{3+\sqrt{5}}{2}}<\sqrt{\frac{e+\sqrt{e^2-4(e-1-\sqrt{e-1})}}{2}}.\end{equation}
 Similarly $K^+_{p, q}=K^{pq+1}_{p, q+1}$ has spectral radius
\begin{equation}\label{eq+}\rho(K^+_{p, q})=
                       \sqrt{\frac{e+\sqrt{e^2-4(e-1-q)}}{2}},\end{equation}
which obtains maximum (resp. minimum) when $p$ is minimum (resp. $p$ is maximum) subject to the fixed number $e=pq+1$ and $2\leq p\leq q$. Note that $e-1-q\leq e-1-\sqrt{e-1}$ in this case. This proves the following lemma.
\begin{lem}\label{comp} The following (i)-(iii) hold.
\begin{enumerate}
\item[(i)] For all positive integers $2\leq p'\leq q',(p',q')\ne (2,2) $, $2\leq p''\leq q''$ satisfying $e=p'q'-1=p''q''+1,$ we have
$$\rho(K^-_{p', q'}), \rho(K^+_{p'', q''})\geq \sqrt{\frac{e+\sqrt{e^2-4(e-1-\sqrt{e-1})}}{2}}.$$ Moreover
 the above equality does not hold for $\rho(K^-_{p',q'})$, and holds for $\rho(K^+_{p'',q''})$ if and only if $p''=q''$.
\item[(ii)] If $e+1$ is not a prime and $p'\geq 2$ is the least integer such that $p'$ divides $e+1$ and $q':=(e+1)/p',$ then for any positive integers $2\leq p\leq q$ with $e=pq-1$, we have
$\rho(K^-_{p, q})\leq \rho(K^-_{p', q'})$, with equality if and only if  $(p, q)=(p', q').$
\item[(iii)] If $e-1$ is not a prime, and $p''\geq 2$ is the least integer such that $p''$ divides $e-1$ and $q'':=(e-1)/p''$, then for positive integers $2\leq p\leq q$ with $e=pq+1$, we have
$\rho(K^+_{p, q})\leq \rho(K^+_{p'', q''})$, with equality if and only if $(p, q)=(p'', q'')$.
\end{enumerate}
\qed
\end{lem}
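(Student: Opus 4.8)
The plan is to reduce all three parts to the monotonicity of a single real function. By Example~\ref{examp} (applied via $K^-_{p,q}=K^{pq-1}_{p,q}$ and $K^+_{p,q}=K^{pq+1}_{p,q+1}$, cf.\ \eqref{eq+}), each spectral radius occurring in the statement has the form
$$f(z):=\sqrt{\frac{e+\sqrt{e^2-4z}}{2}},$$
where $z=e-(p+q)+2$ for $\rho(K^-_{p,q})$ (with $pq=e+1$) and $z=e-1-q$ for $\rho(K^+_{p,q})$ (with $pq=e-1$). Since $f$ is strictly decreasing in $z$ on the relevant range $e^2-4z>0$, every comparison of spectral radii becomes a comparison of the inner quantity $z$ with the inequality reversed: a larger $\rho$ corresponds to a smaller $z$. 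This single observation drives the whole lemma.

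For part~(i) I would treat the two families separately. For $K^+_{p'',q''}$ with $p''q''=e-1$ and $p''\le q''$, the bound $q''\ge\sqrt{p''q''}=\sqrt{e-1}$ gives $z=e-1-q''\le e-1-\sqrt{e-1}$, and applying $f$ yields $\rho(K^+_{p'',q''})\ge f(e-1-\sqrt{e-1})$, with equality precisely when $q''=\sqrt{e-1}$, i.e.\ $p''=q''$. For $K^-_{p',q'}$ with $p'q'=e+1$, AM--GM gives $p'+q'\ge 2\sqrt{e+1}$, hence $z=e+2-(p'+q')\le e+2-2\sqrt{e+1}$; the displayed chain $e+2-2\sqrt{e+1}<e-1-\sqrt{e-1}$ (valid for $e\ge 6$) then forces the strict inequality $\rho(K^-_{p',q'})>f(e-1-\sqrt{e-1})$. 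The excluded pair $(p',q')=(2,2)$ is exactly the small case $e=3$, disposed of directly by \eqref{e=3}.

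Parts~(ii) and~(iii) are then pure optimization over the admissible factorizations of a fixed integer. For~(ii), maximizing $\rho(K^-_{p,q})$ over $pq=e+1$, $2\le p\le q$, amounts to minimizing $z=e+2-(p+q)$, i.e.\ to maximizing $p+q=p+(e+1)/p$; since $t\mapsto t+N/t$ is strictly decreasing on $(0,\sqrt N\,]$, this sum is largest when $p$ is the least divisor $p'\ge 2$ of $e+1$, giving $\rho(K^-_{p,q})\le\rho(K^-_{p',q'})$ with equality iff $(p,q)=(p',q')$. Part~(iii) is identical in spirit: maximizing $\rho(K^+_{p,q})$ over $pq=e-1$ means minimizing $z=e-1-q$, i.e.\ maximizing $q=(e-1)/p$, which again occurs at the least admissible divisor $p''\ge 2$.

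The main obstacle is the strict comparison inside part~(i) for the $K^-$ family: unlike the other estimates, which follow immediately from AM--GM and the monotonicity of $t+N/t$, it requires the genuinely quantitative inequality $3+\sqrt{e-1}<2\sqrt{e+1}$ (equivalent to the displayed chain after squaring), which holds only for $e\ge 6$, so the few small values of $e$ must be checked by hand (the pair $(2,2)$, i.e.\ $e=3$, via \eqref{e=3}). Everywhere else the argument is the routine observation that strict monotonicity of $f$ and of $t+N/t$ upgrades each $\le$ into the stated equality characterization.
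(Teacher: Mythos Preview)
Your approach coincides with the paper's: derive the closed forms from Example~\ref{examp}, note that $f(z)=\sqrt{(e+\sqrt{e^2-4z})/2}$ is strictly decreasing, bound the $K^+$ inner quantity by $q''\ge\sqrt{e-1}$ and the $K^-$ inner quantity by AM--GM followed by the chain $e+2-2\sqrt{e+1}<e-1-\sqrt{e-1}$ for $e\ge 6$, and read off (ii)--(iii) from the monotonicity of $t\mapsto t+N/t$ on $(0,\sqrt N\,]$.

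There is one small slip in your treatment of the low-$e$ end of part~(i) for the $K^-$ family. You assert that the excluded pair $(2,2)$ is ``exactly the small case $e=3$'', as if every remaining admissible $(p',q')$ yields $e\ge 6$. But $(p',q')=(2,3)$ gives $e=5$, which your chain inequality does not cover; indeed there $z=e-(p'+q')+2=2=e-1-\sqrt{e-1}$, so \emph{equality} holds rather than the strict inequality you claim. The paper's discussion proves the strict $K^-$ bound only under the hypothesis $q'\ge p'\ge 3$ and then, in the remark immediately following the lemma, absorbs every $p'=2$ instance into the $K^+$ family via $K^-_{2,q}=K^+_{2,q-1}$; the case $(2,3)$ then becomes $K^+_{2,2}$, which is precisely the $p''=q''$ equality case. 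You should insert this reduction instead of asserting that only $e=3$ escapes the $e\ge 6$ regime.
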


Note that the condition $2\leq p'\leq q',(p',q')\ne (2,2)$ in (i) is from the previous condition $3\leq p'\leq q'$ and $K^-_{2,q}=K^+_{2,q-1}$ for $q\geq 3$.

\section{Graphs with at least two edges different from $K_{p, q}$}

In this section, we consider bipartite graphs which are not complete bipartite and are not considered in Lemma~\ref{comp}. The following lemma is for the special case that the graph has the form $G=G_D$.

\begin{lem}\label{lem1} Let $D=(d_1, d_2, \ldots, d_p)$ be a partition of $e.$
Suppose that $G_D$ is not a complete bipartite graph and is not one of the graphs $K^-_{p', q'}$ or $K^+_{p'', q''}$ for any $2\leq p'\leq q',(p',q')\ne (2,2),$ $2\leq p''\leq q''$ such that $e=p'q'-1=p''q''+1.$ Then
$$\rho(G_D)< \sqrt{\frac{e+\sqrt{e^2-4(e-1-\sqrt{e-1})}}{2}}.$$
\end{lem}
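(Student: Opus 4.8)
The plan is to apply Lemma~\ref{pr1} at a single well-chosen drop of $D$ and to reduce the target to one transparent inequality. Because $G_D$ is not complete bipartite, $D$ is non-constant, hence has an index $s$ with $d_s<d_{s-1}$; for such $s$ put $A=\sum_{i<s}(d_i-d_s)$, $B=\sum_{i\ge s}d_i$ and $C=(s-1)d_s$, so that $e=A+B+C$, $A\ge s-1$ and $B\ge d_s$. With $\phi(y):=\sqrt{(e+\sqrt{e^2-4y})/2}$ (strictly decreasing), Lemma~\ref{pr1} gives $\rho(G_D)\le\phi(AB)$, and the identity
\[
e-1-AB \;=\; C-(A-1)(B-1)
\]
reduces the assertion $\rho(G_D)<\phi\!\left(e-1-\sqrt{e-1}\right)$ to producing one drop with $\delta:=C-(A-1)(B-1)<\sqrt{e-1}$.

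The key structural fact is that $\delta\le 0$ at any \emph{interior} drop, meaning $A\ge s$ and $B\ge d_s+1$ (then $(A-1)(B-1)\ge(s-1)d_s=C$). If a drop is flush only at the top (the rows above all equal $d_s+1$) the identity collapses to $\delta\le d_s$, and if it is flush only at the bottom ($s$ is the last row) to $\delta\le s-1$; so at a one-sidedly flush drop $\delta$ is bounded by a single side-length of the staircase $F(D)$. Listing the distinct values $v_1>\cdots>v_r$, an interior drop is available whenever $r\ge 4$ (the drop between $v_2$ and $v_3$) and whenever $r=3$ unless $v_1=v_2+1$ and the least value is simple. This isolates the two-valued partitions and one three-valued family as the only cases requiring the slack $\sqrt{e-1}$.

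For the exceptional family $D=((v_2{+}1)^{m_1},\,v_2^{\,m_2},\,v_3)$ with $v_3$ simple, the top drop is flush above and the bottom drop flush below, so $\delta\le v_2$ at the former and $\delta\le m_1+m_2$ at the latter; since $v_2(m_1+m_2)<e-1$, the smaller defect is below $\sqrt{e-1}$ by $\min(x,y)\le\sqrt{xy}$. For a two-valued $D=(a^m,b^{\,n})$, where Lemma~\ref{pr1} is an equality and the unique drop is flush unless $a-b\ge2$ and $n\ge 2$, I would compute $\delta$ directly: it is at most $1$ except in the families $(b{+}1,\,b^{\,n})$ and $(a^m,1)$, where it equals $b$ and $m$, and comparison with $\sqrt{e-1}=\sqrt{b(n+1)}$ resp.\ $\sqrt{ma}$ shows $\delta<\sqrt{e-1}$ exactly when $G_D$ is not a $K^+$. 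The remaining flush-above-and-below case $a-b=n=1$ is $G_D=K^-_{p,q}$, excluded unless $(p,q)=(2,2)$, and $K^-_{2,2}$ has $\delta=1<\sqrt2$.

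The hardest part is precisely this near-extremal analysis: at every drop the interior estimate just fails, the inequality is genuinely tight (equality at square $K^+$, reversed at $K^-$), and the slack $\sqrt{e-1}$ must be extracted either by playing two flush drops against each other or by a direct one-drop comparison. A second, easily-overlooked subtlety is that the exclusions must be read off the graph, not the partition: e.g.\ $(a^m,1)$ is the graph $K^+_{a,m}$ exactly when $a\le m$---visible only through the conjugate partition---so the equivalence ``the bound holds $\iff G_D\notin\{K^-_{p,q},K^+_{p,q}\}$'' has to be checked with conjugation in mind.
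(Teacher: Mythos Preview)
Your proof is correct, and it is organized rather differently from the paper's, though both rest on Lemma~\ref{pr1} and both reduce to showing $Y_{s,t}>e-1-\sqrt{e-1}$ at some drop.

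The paper splits the Ferrers diagram into a $3\times 3$ block array $\{b_{ij}\}$ around the chosen drop, writes $Y_{s,t}=(b_{12}+b_{13})(b_{21}+b_{31})$, and does a case analysis on which of $b_{13},b_{31}$ vanish. When both are nonzero it gets $Y_{s,t}\ge e-1$ directly (your ``interior'' case in disguise); when one vanishes it either switches to a neighbouring drop $s'$ or uses the specific structure (${}^eK_{p,q}$, etc.) together with crude bounds like $e\ge p^2-p+1$ to squeeze out the $\sqrt{e-1}$. The subcase tree (i), (iia), (iib) and the repeated re-choice of $s$ make the argument somewhat ad hoc.

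Your route replaces this with the identity $e-1-AB=C-(A-1)(B-1)=:\delta$ and the clean dichotomy interior/flush, then classifies by the number $r$ of distinct parts. The payoff is that for $r\ge 4$, and for generic $r=3$, a single interior drop with $\delta\le 0$ ends the discussion; only the genuinely extremal shapes (two-valued $D$, and the one three-valued family $((v_2{+}1)^{m_1},v_2^{m_2},v_3)$) need the $\sqrt{e-1}$ slack. Your two-drop trick $\min(v_2,m_1+m_2)\le\sqrt{v_2(m_1+m_2)}<\sqrt{e-1}$ for the three-valued family is neater than anything in the paper's case (ii). Conversely, the paper's block picture makes the two-valued boundary cases (your $(b{+}1,b^n)$, $(a^m,1)$, and $K^-$ shapes) slightly more transparent, since $b_{12},b_{21}$ are literally side-lengths of the missing rectangle; your direct computation of $\delta$ there is correct but has to be checked family by family, and your remark that the $K^+$ identification must go through the conjugate partition (e.g.\ $(a^m,1)\cong K^+_{a,m}$ only when $a\le m$) is exactly the care needed. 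Overall your decomposition is more conceptual and isolates the hard boundary more sharply; the paper's is more hands-on at that boundary.
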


\begin{proof} When $e\leq 3$, $G_D=K^-_{2,2}$ is the only graph and the inequality holds by (\ref{e=3}). We assume that $e\geq 4$. The assumption implies that $p, q:=d_1\geq 2$ and $4\leq e\leq pq-2$.
Using $D^*$ to replace $D$ if necessary, we might assume that $2\leq p\leq q$ and $q\geq 3.$
Since $G_D$ is not complete, we choose $s$ such that $1\leq s\leq p $ and $d_{s-1}>d_s.$ Set $t=d_s+1$.
According to the partition $(s-1, 1, p-s)$ of rows and the partition $(t-1, 1, q-t)$ of columns,
 the Ferrers diagram $F(D)$ is divided into $9$ blocks and the number $b_{ij}$ of $1$'s in the block $(i, j)$ for $1\leq i, j\leq 3$ is shown as
 $$\begin{bmatrix}
b_{11}& b_{12} & b_{13}\\
b_{21}     & b_{22} & b_{23} \\
b_{31}& b_{32} & b_{33}
\end{bmatrix}=\begin{bmatrix}
(s-1)d_s& s-1 & \sum_{i=1}^{s-1}{(d_i-d_s-1)}\\
d_s     & 0 & 0 \\
\sum_{i=s+1}^pd_i& 0 & 0
\end{bmatrix}.$$
Note that $b_{11}=b_{12}b_{21}$ and  $b_{11}+b_{12}+b_{13}+b_{21}+b_{31}=e.$
Referring to Lemma~\ref{pr1} and (\ref{eY}), it suffices to show that $Y_{s, t}>e-1-\sqrt{e-1}.$
Note that
\begin{align*}Y_{s, t}=&\sum_{i=1}^{s-1}{(d_i-d_s)}\cdot \sum_{i=s}^pd_i=(s-1+\sum_{i=1}^{s-1}{(d_i-d_s-1)}) (d_s+ \sum_{i=s+1}^pd_i )\\
=&(b_{12}+b_{13})(b_{21}+b_{31})=b_{11}+b_{12}b_{31}+b_{21}b_{13}+b_{13}b_{31}.
\end{align*}
Note that $b_{12}b_{21}\not=0$, and that $G\not=K^-_{p', q'}$ implies that $b_{13}\not=0$ or $b_{31}\not=0.$ If both parts $b_{13}$ and $b_{31}$ are not zero then
$b_{12}b_{31}\geq b_{12}+b_{31}-1,$ $b_{21}b_{13}\geq b_{21}+b_{13}-1,$ and  $b_{13}b_{31}\geq 1,$
so $Y_{s, t}\geq b_{11}+(b_{12}+b_{31}-1)+(b_{21}+b_{13}-1)+1=e-1>e-1-\sqrt{e-1}.$ The proof is completed.
The above proof holds for any $s$ with $d_{s-1}<d_s.$ We choose the least one with such property, and
might assume one of the following two cases (i)-(ii).
\smallskip

{\bf Case} (i).  $b_{31}=0$ and $b_{13}\not=0$: Then $s=p=b_{12}+1\geq 2,$
and $G={^e}K_{p, q}$, where $e=pq-(q-d_p)\geq p^2-p+1>(p-1/2)^2+3/4.$
 Thus
$$Y_{s, t}= b_{11}+b_{21}b_{13}\geq e-1-b_{12}=e-p > e-1- \sqrt{e-1}.$$

{\bf Case} (ii).  $b_{13}=0$ and $b_{31}\not=0$: The condition $b_{13}=0$ implies that $t=q$ and $b_{21}=q-1\geq 2.$ The proof is further divided into the following two cases (iia) and (iib).
\smallskip

{\bf Case} (iia). $1\leq b_{31}<b_{21}$: If $s<p-1$, let $s'=s+1$ and $t'=d_{s'}+1$. Then $d_{s'-1}>d_{s'}$ and $d_{s'+1}\not=0$.  Let $b'_{ij}$ be the $b_{ij}$ corresponding to the new choice of $s'$ and $t'$.
Then $b'_{13}b'_{31}\not=0$ and the proof is completed as in the beginning.
Note that $s\not=p$ since $b_{31}\not=0.$ Then we may assume $s=p-1.$
This implies that $b_{31}=d_p<q-1$ and $e = pq-1-q+d_p\geq p^2-p=(p-1/2)^2-1/4.$
Let $s'=p$ and $t'=d_p+1$, and then
$$Y_{s', t'}=b'_{21}(b'_{12}+b'_{13})\geq e-1-b'_{12}=e-p> e-1- \sqrt{e-1}.$$

{\bf Case} (iib).  $b_{31}\geq b_{21}$: If $b_{12}=1$ then by the assumption $G\not=K^+_{p'', q''},$
there exists another $s''>s$ such that $d_{s''}<d_{s''-1}$. Apply the above proof on $(s, t)=(s'', t'')$.
Since $b''_{13}\geq 1$, we might assume $b''_{31}=0$. Then $s''=p$ and
$e=(p-1)(q-1)+d_p+1\geq (p-1)^2+2.$ Hence
$$Y_{s'', t''}=b''_{21}(b''_{12}+b''_{13})\geq e-1-b''_{12}=e-p> e-1- \sqrt{e-1}.$$
We now assume in the last situation that $b_{12}>1.$ Then
$$Y_{s, t}=b_{11}+(b_{12}-1)b_{31}+b_{31}\geq b_{11}+b_{12}+2b_{31}-2\geq e-2> e-1- \sqrt{e-1}.$$
\end{proof}

We now study the general case.

\begin{prop}\label{bound_e}
Let $G$ be a bipartite graph without isolated vertices which is not a complete bipartite graph and one of the graphs $K^-_{p', q'},$ $K^+_{p'', q''}$ for any $2\leq p'\leq q',(p',q')\ne (2,2),$ $2\leq p''\leq q'',$ such that
$e=p'q'-1=p''q''+1$ is the number of edges in $G$. Then
$$\rho(G)< \sqrt{\frac{e+\sqrt{e^2-4(e-1-\sqrt{e-1})}}{2}}.$$
\end{prop}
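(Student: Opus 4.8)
The plan is to reduce Proposition~\ref{bound_e} to the special case already handled in Lemma~\ref{lem1}. The key tool is Lemma~\ref{lemD}, which tells us that for any bipartite graph $G$ without isolated vertices whose part $X$ has degree sequence $D=(d_1,\ldots,d_p)$, we have $\rho(G)\le\rho(G_D)$, with equality if and only if $G=G_D$. So my first step is to take the given graph $G$, read off the degree sequence $D$ of one part of its bipartition, and pass to the associated ``staircase'' graph $G_D$, which shares the same number of edges $e$ and satisfies $\rho(G)\le\rho(G_D)$.

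The main obstacle is purely bookkeeping about the hypotheses: I must verify that $G_D$ still satisfies the exclusion conditions of Lemma~\ref{lem1}, namely that $G_D$ is neither a complete bipartite graph nor any of the graphs $K^-_{p',q'}$ or $K^+_{p'',q''}$. Here I would argue that the excluded families are already in the canonical $G_D$ form (a complete bipartite graph, a near-complete bipartite graph, and a pendant extension all have monotone degree sequences equal to their own staircase), so $G=G_D$ holds for each of them. Consequently, if $G$ is none of the excluded graphs, then $G_D$ is also none of them: were $G_D$ one of these exceptional graphs, then by the equality case of Lemma~\ref{lemD} (or by inspecting degree sequences directly) $G$ would have to coincide with $G_D$, contradicting the hypothesis on $G$. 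This step must be stated carefully because the number of edges $e$ is fixed throughout, so $G$ and $G_D$ are compared within the same $e$.

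Once $G_D$ is confirmed to satisfy all the hypotheses of Lemma~\ref{lem1}, I would apply that lemma to conclude
$$\rho(G_D)<\sqrt{\frac{e+\sqrt{e^2-4(e-1-\sqrt{e-1})}}{2}}.$$
Combining this with $\rho(G)\le\rho(G_D)$ from Lemma~\ref{lemD} yields the desired strict inequality for $\rho(G)$, and the proof is complete.

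I expect the only genuinely delicate point to be the second paragraph: ensuring that passing from $G$ to $G_D$ does not accidentally land inside the excluded families. One clean way to phrase this is contrapositively—assume $\rho(G)\ge$ the stated bound; then $\rho(G_D)\ge$ the bound as well, so by Lemma~\ref{lem1} $G_D$ must be complete bipartite or one of $K^-_{p',q'}$, $K^+_{p'',q''}$; but each such $G_D$ forces $G=G_D$ (up to isomorphism) by the equality condition in Lemma~\ref{lemD} together with the fact that these graphs equal their own staircase form, contradicting the hypothesis on $G$. This routes all the combinatorial matching through the clean equality characterization in Lemma~\ref{lemD} rather than re-deriving degree-sequence identities by hand.
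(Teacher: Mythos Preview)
There is a genuine gap in the second paragraph. Your claim that ``if $G_D$ is one of the exceptional graphs then $G=G_D$'' is false, and the equality case of Lemma~\ref{lemD} does not rescue it: that lemma says $\rho(G)=\rho(G_D)$ forces $G=G_D$, but you never establish equality of the spectral radii---you only have $\rho(G)\le\rho(G_D)$. A concrete counterexample is $G=C_6$, the $6$-cycle. Both parts of its bipartition have degree sequence $D=(2,2,2)$, so $G_D=K_{3,2}$ is complete bipartite (hence excluded from Lemma~\ref{lem1}), yet $C_6\ne K_{3,2}$. Here $\rho(G_D)=\sqrt{6}\approx 2.449$ exceeds the bound $\sqrt{\frac{6+\sqrt{16+4\sqrt{5}}}{2}}\approx 2.345$, so the chain $\rho(G)\le\rho(G_D)<\text{bound}$ breaks. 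The same phenomenon already occurs for $P_5$ with $D=(2,2)$ and $G_D=K_{2,2}$.

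The paper's proof confronts exactly this obstruction. It first tries the degree sequences $D$ and $D'$ of \emph{both} parts; if either $G_D$ or $G_{D'}$ avoids the exceptional list, Lemma~\ref{lem1} applies directly. When both $G_D$ and $G_{D'}$ are exceptional and $G$ is neither, the paper uses the edge-rotation idea sketched after Lemma~\ref{lemD}: one finds $y_i,y_j\in Y$ with $|N(y_i)|=|N(y_j)|$ but $N(y_i)\ne N(y_j)$, and shifts one edge from $y_j$ to $y_i$. This produces a graph $G''$ with $\rho(G)\le\rho(G'')$ whose $Y$-degree sequence $D''$ replaces two equal entries $a,a$ by $a+1,a-1$; the resulting $G_{D''}$ can no longer be $K_{p,q}$, $K^-_{p',q'}$, or $K^+_{p'',q''}$, so Lemma~\ref{lem1} finishes the job. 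A short structural argument shows that if no such pair $y_i,y_j$ exists then in fact $G=G_D$, contradicting the hypothesis. Your outline is missing this entire edge-switching mechanism.
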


\begin{proof}
Let $G_D$ be the graph obtained from a degree sequence $D$ of any part, say $X$, in the bipartition $X\cup Y$ of $G$.
Then $\rho(G)\leq \rho(G_D)$ by Lemma~\ref{lemD}. The proof is finished if $G_D$ satisfies
the assumption of Lemma~\ref{lem1}.
Let $D'$ be the degree sequence of the other part $Y$ in the bipartition of $G$.
Then we might assume that $G\not=G_D$, $G\not=G_{D'}$, and $G_D$ and $G_{D'}$ are graphs of the forms
$K_{p, q}$, $K^-_{p', q'},$ or $K^+_{p'', q''}$.
For $y_i\in Y$, let $N(y_i)$ be the set of neighbors of $y_i$ in $G$.
Suppose for this moment that $|N(y_i)|=|N(y_j)|$ and $N(y_i)\not=N(y_j)$ for some $y_i, y_j\in Y$.
Assume that $y_i$ is before $y_j$ in the order that makes  the entries in
the latter part of the positive Perron eigenvector described after Lemma~\ref{lemD} nonincreasing.
Let $G''$ be the bipartite graph obtained from $G$ by moving an edge incident on $y_j$ but not on $y_i$
to incident on $y_i$, keeping the other endpoint of this edge unchanged.  Let $D''$ be the new degree sequence on the part $Y$ of the new bipartite graph $G''.$ Then $\rho(G)\leq \rho(G'')\leq \rho(G_{D''})$.
Noticing that $D''$ is obtained from $D'$  by replacing two given equal values $a$ by $a-1$ and $a+1$. Hence $G_{D''}$ is not a graph of the form $K_{p, q}$, $K^-_{p', q'},$ or $K^+_{p'', q''}$. Thus the proof follows from Lemma~\ref{lem1}. Hence we might assume that if $|N(y_i)|=|N(y_j)|$ then $N(y_i)=N(y_j)$ for all $y_i, y_j\in Y.$  Note that $D$ has at most two distinct values, and so does $D'$. Reordering the vertices in $Y$ such that the former has larger degree and then doing the same thing for $X$, we find indeed $G=G_D=G_{D'}$, a contradiction.
\end{proof}

We provide two applications of Proposition~\ref{bound_e}.

\begin{cor}\label{twin} Let $G$ be a bipartite graph with $e$ edges without isolated vertices. Suppose that $G$ is not a complete bipartite graph, and $(e-1, e+1)$ is a pair of twin primes.
Then
$$\rho(G)< \sqrt{\frac{e+\sqrt{e^2-4(e-1-\sqrt{e-1})}}{2}}.$$
\end{cor}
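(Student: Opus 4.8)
The plan is to derive this as a direct consequence of Proposition~\ref{bound_e}. That proposition already yields the desired strict inequality for every bipartite graph $G$ with $e$ edges and no isolated vertices, provided $G$ is neither complete bipartite nor one of the exceptional graphs $K^-_{p',q'}$ (with $e=p'q'-1$, $2\le p'\le q'$, $(p',q')\ne(2,2)$) nor $K^+_{p'',q''}$ (with $e=p''q''+1$, $2\le p''\le q''$). Since $G$ is assumed not to be complete bipartite, the whole task reduces to checking that the twin-prime hypothesis forces $G$ to avoid both exceptional families; once this is done, Proposition~\ref{bound_e} applies verbatim.

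First I would translate the two exceptional families into divisibility statements. If $G$ were isomorphic to some $K^-_{p',q'}$, then its edge count would satisfy $e=p'q'-1$ with $2\le p'\le q'$, hence $e+1=p'q'$ would be a product of two integers each at least $2$, i.e.\ $e+1$ would be composite. Likewise, if $G$ were isomorphic to some $K^+_{p'',q''}$, then $e=p''q''+1$ with $2\le p''\le q''$, so $e-1=p''q''$ would be composite.

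Next I would invoke the twin-prime assumption. By definition, $(e-1,e+1)$ being a pair of twin primes means that both $e-1$ and $e+1$ are prime. A prime exceeding $1$ admits no factorization into two integers each at least $2$, so $e+1$ prime rules out $G\cong K^-_{p',q'}$, and $e-1$ prime rules out $G\cong K^+_{p'',q''}$. As a sanity check on the range, the smallest twin pair realizable as $(e-1,e+1)$ is $(3,5)$, giving $e=4$, so the degenerate situation $e\le 3$ — where $K^-_{2,2}$ is the only relevant graph — never arises here. Consequently $G$ meets all the hypotheses of Proposition~\ref{bound_e}, and the stated bound follows immediately.

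There is no genuine analytic obstacle in this argument: all the spectral work is already carried out in Lemma~\ref{lem1} and Proposition~\ref{bound_e}. The only point requiring care is the bookkeeping that identifies primality of $e\pm 1$ with the nonexistence of the factorizations $p'q'=e+1$ and $p''q''=e-1$ defining the two exceptional families; matching the roles of $e-1$ and $e+1$ to $K^+_{p'',q''}$ and $K^-_{p',q'}$ respectively (rather than swapping them) is the one place where a careless reading could go astray.
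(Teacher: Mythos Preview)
Your proposal is correct and follows exactly the paper's own argument: observe that primality of $e+1$ and $e-1$ rules out the factorizations $e+1=p'q'$ and $e-1=p''q''$ with factors at least $2$, so the exceptional graphs $K^-_{p',q'}$ and $K^+_{p'',q''}$ cannot occur, and Proposition~\ref{bound_e} finishes. The paper states this in a single sentence; your version simply expands the divisibility bookkeeping.
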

\begin{proof}
If $(e-1, e+1)$ is a pair of primes then there is no way to express $G$ as a graph of the forms
$K^-_{p', q'}$ or $K^+_{p'', q''}$. The proof follows from Proposition~\ref{bound_e}.
\end{proof}

\begin{cor}\label{old}
Let $G$ be a bipartite graph without isolated vertices which is not one of the graphs $K_{p, q}$, $K^-_{p', q'},$ $K^+_{p'', q''}$ for any $1\leq p\leq q,$ $2\leq p'\leq q',$ $2\leq p''\leq q''$ such that
$e=pq=p'q'-1=p''q''+1$ is the number of edges in $G$.
Assume that $e=st+1$ (resp. $e=st-1$) for $2\leq s \leq t$. Then
$$\rho(G)< \rho(K_{s, t}^+)\qquad ({\rm resp.}~\rho(G)< \rho(K_{s, t}^-)).$$
\end{cor}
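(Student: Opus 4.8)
The plan is to prove the corollary by sandwiching $\rho(G)$ between the universal upper bound of Proposition~\ref{bound_e} and the lower bounds for $K^+_{s,t}$ and $K^-_{s,t}$ furnished by Lemma~\ref{comp}(i); no new estimate is needed. Throughout write $B:=\sqrt{\frac{e+\sqrt{e^2-4(e-1-\sqrt{e-1})}}{2}}$ for the recurring threshold. The first thing I would point out is that the hypothesis on $G$ here is \emph{stronger} than the one in Proposition~\ref{bound_e}: the present statement forbids $G$ from being $K^-_{p',q'}$ for \emph{all} $2\le p'\le q'$ (the value $(p',q')=(2,2)$ included), as well as every $K_{p,q}$ and every $K^+_{p'',q''}$. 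Hence $G$ satisfies the hypothesis of Proposition~\ref{bound_e}, and we obtain $\rho(G)<B$.

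Next I would match edge counts. Because $K^+_{s,t}$ has $st+1$ edges and $K^-_{s,t}$ has $st-1$ edges, the assumption $e=st+1$ (resp.\ $e=st-1$) exhibits $K^+_{s,t}$ (resp.\ $K^-_{s,t}$) as a graph on exactly $e$ edges, namely $K^+_{p'',q''}$ with $(p'',q'')=(s,t)$ (resp.\ $K^-_{p',q'}$ with $(p',q')=(s,t)$), where $2\le s\le t$. In the $K^+$ direction Lemma~\ref{comp}(i) applies directly and gives $\rho(K^+_{s,t})\ge B$, so that $\rho(G)<B\le\rho(K^+_{s,t})$. In the $K^-$ direction, as long as $(s,t)\ne(2,2)$, Lemma~\ref{comp}(i) likewise yields $\rho(K^-_{s,t})\ge B$ (in fact strictly), and therefore $\rho(G)<B\le\rho(K^-_{s,t})$.

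The single step that does \emph{not} go through mechanically, and which I expect to be the only real obstacle, is the excluded value $(s,t)=(2,2)$ in the $K^-$ case, i.e.\ $e=st-1=3$. There the threshold $B$ overshoots the target, since by (\ref{e=3}) one has $\rho(K^-_{2,2})<B$, so the bound $\rho(G)<B$ is too weak to conclude and the sandwich collapses. To close this case I would argue directly from the structure of $G$: a connected bipartite graph with $3$ edges is either $K_{1,3}$ or $P_4=K^-_{2,2}$, both barred by hypothesis, so $G$ is disconnected; its largest component then has at most $2$ edges, whence $\rho(G)\le\sqrt{2}<\sqrt{\frac{3+\sqrt5}{2}}=\rho(K^-_{2,2})$ by the basic bound $\rho\le\sqrt{e}$ applied componentwise. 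This settles $e=3$ and completes the proof, the main work being the bookkeeping of hypotheses in the first paragraph and this lone small case.
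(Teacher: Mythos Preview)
Your proof is correct and follows essentially the same route as the paper's: apply Proposition~\ref{bound_e} to get $\rho(G)<B$, then Lemma~\ref{comp}(i) to get $\rho(K^\pm_{s,t})\ge B$, and treat the leftover case $e=3$ by hand. Your enumeration of connected bipartite graphs on three edges is the same as the paper's listing of $3K_2$ and $K_{1,2}\cup K_2$ in disguise; the paper additionally reminds the reader that $K^-_{2,t+1}=K^+_{2,t}$, but as you implicitly observe, this is already built into the stated range of Lemma~\ref{comp}(i) and is not logically required.
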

\begin{proof} If $s=t=2$ and $e=st-1=3$ then either $G=3K_2$ the disjoint union of three edges or $G=K_{1, 2}\cup K_2$ the disjoint of a path of order $3$ and an edge. One can easily check that $\rho(G)< \rho(K_{2, 2}^-).$ The remaining cases are  from Proposition~\ref{bound_e} and Lemma~\ref{comp}(i) and noticing that
 $K_{2, t+1}^-=K^+_{2, t}$ for $t\geq 2$.
\end{proof}

It is worth mentioning that the result $\rho(G)< \rho(K_{s, t}^-)$ in Corollary~\ref{old} had also been proven in \cite[Theorem 8.1]{Bha} under more assumptions.

\section{Main Theorems}

For $e\geq 2$, recall that $\rho (e)$ is
the maximal value $\rho(G)$ of a bipartite graph $G$ with $e$ edges which is not a union of a complete bipartite graph and some isolated vertices if any.
 Note that $$\rho(2)=\rho(2K_2)=1,~ {\rm and~~} \rho(3)=\rho(K^-_{2,2})=\sqrt{\frac{3+\sqrt{5}}{2}}.$$
Two theorems about $\rho(e)$  are given in this section.

\begin{thm}\label{main1}
Let $G$ be a bipartite graph with $e\geq 4$ edges without isolated vertices such that $\rho(G)=\rho(e)$.
Then the following (i)--(iv) hold.
\begin{enumerate}
\item[(i)] If $e$ is odd then $G=K_{2, q}^-$, where $q=(e+1)/2.$
\item[(ii)] If $e$ is even, $e-1$ is a prime and $e+1$ is not a prime, then $G=K_{p', q'}^-,$ where $p'\geq 3$ is the least integer that divides $e+1$ and $q'=(e+1)/p'.$
\item[(iii)] If $e$ is even, $e-1$ is not a prime and $e+1$ is a prime, then $G=K_{p'', q''}^+,$ where $p''\geq 3$ is the least integer that divides $e-1$ and $q''=(e-1)/p''.$
\item[(iv)] If $e$ is even and neither $e-1$ nor $e+1$ is a prime, then $G\in \{K_{p', q'}^-, K_{p'', q''}^+\}$, where $p', q'$ are as in (ii) and $p'', q''$ are as in (iii).
\end{enumerate}
\end{thm}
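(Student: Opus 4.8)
The plan is to combine two ingredients already in hand. The first is Corollary~\ref{old} (itself built on Proposition~\ref{bound_e}), which shows that any bipartite graph without isolated vertices that is neither complete bipartite nor one of the near-complete graphs $K^-_{p',q'}$, $K^+_{p'',q''}$ is \emph{strictly} beaten by the appropriate near-complete graph: $\rho(G)<\rho(K^-_{s,t})$ when $e=st-1$, and $\rho(G)<\rho(K^+_{s,t})$ when $e=st+1$. The second is Lemma~\ref{comp}(ii)--(iii), which locates the maximum within each family at the smallest admissible part-order. Accordingly the argument splits into two stages: first pin the optimizer $G$ down to one of the two special families, and then optimize the free parameter inside that family. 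Throughout, $\tau:=\sqrt{(e+\sqrt{e^2-4(e-1-\sqrt{e-1})})/2}$ is the common threshold of Section~3, so that every surviving $K^-_{p',q'}$ satisfies $\rho>\tau$ and every surviving $K^+_{p'',q''}$ satisfies $\rho\ge\tau$, while all other non-complete graphs fall strictly below it.

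For the first stage I would verify that in each of the four cases at least one graph of the form $K^-_{p',q'}$ or $K^+_{p'',q''}$ actually exists as a competitor with exactly $e$ edges. Given such a competitor, applying Corollary~\ref{old} with the corresponding factorization $e=st\mp1$ shows that any non-complete $G$ lying outside both families has $\rho(G)$ strictly below the competitor's spectral radius, hence below $\rho(e)$; since $G$ is assumed to realize $\rho(e)$ and (by definition of $\rho(e)$) is not complete bipartite, this forces $G$ itself to be some $K^-_{p,q}$ or $K^+_{p,q}$.

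The second stage is the case split driven by the parity of $e$ and the primality of $e\pm1$. The governing observation is that $e\pm1$ factors as $st$ with $2\le s\le t$ precisely when $e\pm1$ is composite, and the least such $s$ equals $2$ exactly when $e\pm1$ is even. Thus for odd $e\ge 5$ both $e-1$ and $e+1$ are even and at least $4$, hence composite, so Lemma~\ref{comp}(ii)--(iii) apply with least part-order $2$; using the identity $K^+_{2,t}=K^-_{2,t+1}$ from the introduction, the maxima of the two families coincide at the single graph $K^-_{2,(e+1)/2}$, which yields (i). For even $e$ the numbers $e\pm1$ are odd, so $2\nmid(e\pm1)$ and the least admissible part-orders $p',p''$ are at least $3$; cases (ii), (iii), (iv) then follow by observing which of $e-1$, $e+1$ is prime (a prime value empties the corresponding family entirely) and invoking Lemma~\ref{comp}(ii) and/or (iii) to select the extremal members $K^-_{p',q'}$ and $K^+_{p'',q''}$.

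The step I expect to demand the most care is the bookkeeping of existence and uniqueness, not any hard estimate. When $e-1$ (resp. $e+1$) is prime one must confirm that the $K^+$ (resp. $K^-$) family is genuinely empty while the surviving family still supplies a competitor strong enough to trigger Corollary~\ref{old}; and the \emph{strictness} clauses in Lemma~\ref{comp}(i)--(iii) are exactly what upgrade ``$G$ lies in the family'' to ``$G$ is the unique extremal member.'' The single delicate point is the odd case, where it must be checked that the two a~priori distinct optimizers $K^-_{2,(e+1)/2}$ and $K^+_{2,(e-1)/2}$ are literally the same graph, so that (i) records one answer, and that no $K^{\pm}$ graph with part-order $\ge 3$ ties with it.
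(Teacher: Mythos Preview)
Your proposal is correct and follows essentially the same route as the paper's own proof. The paper argues in three sentences: $G$ is not complete bipartite by the definition of $\rho(e)$; Lemma~\ref{comp}(i) together with Proposition~\ref{bound_e} forces $G$ into the families $\{K^-_{p,q},K^+_{p,q}\}$; and Lemma~\ref{comp}(ii)--(iii) then selects the extremal member of each family. You do the same thing, only you route the first reduction through Corollary~\ref{old} (which the paper derives from exactly those two results) and spell out the parity/primality bookkeeping that the paper leaves implicit, including the identification $K^+_{2,(e-1)/2}=K^-_{2,(e+1)/2}$ in the odd case.
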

\begin{proof}
By the definition of $\rho(e)$ and the fact \cite[Proposition 2.1]{Bha} which is mentioned in the introduction, $G$ is not a complete graph.
 From Lemma~\ref{comp}(i) and Proposition~\ref{bound_e}, we only need to
compare the spectral radii  $\rho(K^-_{p, q})$ and $\rho(K^+_{p, q})$  for all possible positive integers $2\leq p\leq q$ that keep the graphs have $e$ edges.
This has been done in Lemma~\ref{comp}(ii)-(iii).
\end{proof}

\begin{thm}\label{main2} Let $e\geq 4$ be an integer. Then $(e-1, e+1)$ is a pair of twin primes if and only if
$$\rho(e)<\sqrt{\frac{e+\sqrt{e^2-4(e-1-\sqrt{e-1})}}{2}}.$$
\end{thm}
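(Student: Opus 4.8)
The plan is to prove both directions of the biconditional separately, exploiting the structural results already established.

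For the forward direction, suppose $(e-1, e+1)$ is a pair of twin primes. Then by definition both $e-1$ and $e+1$ are primes, so there is no factorization $e = p'q'-1$ or $e = p''q''+1$ with $2\leq p'\leq q'$ or $2\leq p''\leq q''$ except the trivial ones giving complete bipartite graphs. Consequently no graph with $e$ edges can take the form $K^-_{p', q'}$ or $K^+_{p'', q''}$. Thus every bipartite graph $G$ with $e$ edges that is not a complete bipartite graph falls under the hypothesis of Corollary~\ref{twin} (equivalently Proposition~\ref{bound_e}), which gives
\begin{equation*}
\rho(G) < \sqrt{\frac{e+\sqrt{e^2-4(e-1-\sqrt{e-1})}}{2}}.
\end{equation*}
Taking the maximum over all such $G$ yields the same strict bound for $\rho(e)$, since the supremum of finitely many values each strictly below a fixed bound is itself strictly below that bound.

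For the converse, I would argue by contraposition: assume $(e-1, e+1)$ is \emph{not} a pair of twin primes and show the displayed inequality fails, i.e.\ $\rho(e) \geq \sqrt{(e+\sqrt{e^2-4(e-1-\sqrt{e-1})})/2}$. The hypothesis means at least one of $e-1$, $e+1$ is composite (or, for small $e$, one may be handled directly). If $e+1$ is composite, then $e = p'q'-1$ admits a nontrivial factorization with $2\leq p'\leq q'$ and $(p',q')\neq(2,2)$, so $K^-_{p', q'}$ is a legitimate competitor for $\rho(e)$; Lemma~\ref{comp}(i) then gives $\rho(K^-_{p', q'}) \geq \sqrt{(e+\sqrt{e^2-4(e-1-\sqrt{e-1})})/2}$ with strict inequality, whence $\rho(e)$ meets or exceeds the bound. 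Symmetrically, if $e-1$ is composite, the graph $K^+_{p'', q''}$ with $e = p''q''+1$ serves as the competitor, and Lemma~\ref{comp}(i) supplies the bound $\rho(K^+_{p'', q''}) \geq \sqrt{(e+\sqrt{e^2-4(e-1-\sqrt{e-1})})/2}$.

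The main obstacle I anticipate is ensuring the converse direction is airtight at the boundary and small-$e$ cases. Specifically, Lemma~\ref{comp}(i) excludes $(p',q')=(2,2)$, so when the only available factorization of $e+1$ is $2\cdot 2=4$ (that is, $e=3$) the $K^-$ competitor degenerates; the standing assumption $e\geq 4$ should sidestep this, but I would verify that for every even $e\geq 4$ with $e+1$ composite there is a valid factorization with $(p',q')\neq(2,2)$, and likewise a valid $K^+$ competitor when $e-1$ is composite. I would also confirm that ``$(e-1,e+1)$ not twin primes'' is correctly negated to ``at least one of $e-1, e+1$ is composite,'' noting that for $e\geq 4$ neither of $e-1, e+1$ equals $1$ and at most one can be even, so the case analysis above is exhaustive. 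With these checks, both implications close and the biconditional follows.
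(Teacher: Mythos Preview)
Your proof is correct and follows essentially the same approach as the paper: Corollary~\ref{twin} for the forward direction, and Lemma~\ref{comp}(i) for the converse. The only difference is that the paper routes the converse through Theorem~\ref{main1} to identify the extremal graph, whereas you more directly exhibit a single competitor $K^-_{p',q'}$ or $K^+_{p'',q''}$ and invoke Lemma~\ref{comp}(i); this is slightly leaner but the same idea.
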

\begin{proof}
The necessity is by Corollary~\ref{twin}. The sufficiency is from Theorem~\ref{main1} and Lemma~\ref{comp}(i).
\end{proof}

Due to Yitang Zhang's recent result \cite{z14}, the conjecture if there are infinite pairs of twin primes obtains much attention. Theorem~\ref{main2} provides a spectral description of the pairs of twin primes.

\section{Numerical comparisons}

In the case (iv) of Theorem~\ref{main1}, the two graphs $K_{p', q'}^-$ and $K_{p'', q''}^+ $ are candidates to be extremal graph. For even $e\leq 100$ and neither $e-1$ nor $e+1$ is a prime, we shall determine
which graph has larger spectral radius.
The symbol $-$ in the last column of the following table means that $K^-_{p',q'}$ wins, i.e. $\rho(K_{p', q'}^-)>\rho(K_{p'', q''}^+)$
and $+$ otherwise.
\bigskip

\begin{center}
\begin{tabular}{|c|c|c|c|}
  \hline
    $e$ & $\rho(K_{p', q'}^-)$ & $\rho(K_{p'', q''}^+)$ & winner \\  \hline
    $26$ & $\sqrt{13+3\sqrt{17}}$ & $\sqrt{13+\sqrt{149}}$ & $-$ \\   \hline
    $34$ &  $\sqrt{17+\sqrt{265}}$ & $\sqrt{17+\sqrt{267}}$  & $+$   \\   \hline
    $50$ & $\sqrt{25+\sqrt{593}}$   &  $\sqrt{25+\sqrt{583}}$ & $-$   \\   \hline
    $56$ & $\sqrt{28+\sqrt{748}}$  & $\sqrt{28+\sqrt{740}}$  & $-$  \\   \hline
    $64$ & $\sqrt{32+\sqrt{976}}$  & $\sqrt{32+\sqrt{982}}$  &  $+$ \\   \hline
    $76$ & $\sqrt{38+\sqrt{1384}}$  & $\sqrt{38+\sqrt{1394}}$  & $+$  \\ \hline
    $86$ & $\sqrt{43+\sqrt{1813}}$  & $\sqrt{43+\sqrt{1781}}$  & $-$  \\ \hline
    $92$ & $\sqrt{46+\sqrt{2096}}$  & $\sqrt{46+\sqrt{2078}}$  & $-$  \\ \hline
    $94$ & $\sqrt{47+\sqrt{2137}}$  & $\sqrt{47+\sqrt{2147}}$  & $+$  \\
  \hline
\end{tabular}

\bigskip

{\bf Table. Comparisons of $\rho(K_{p', q'}^-)$ and $\rho(K_{p'', q''}^+)$ in case (iv) of Theorem~\ref{main1} for $e\leq 100$}

\end{center}

\section*{Acknowledgments}
The second author is honored to be an exchange student from HIT to NCTU and thank Dr. Chia-an Liu for his introduction of Lemma 2.2 which turns out to be an important tool of this paper. This research is supported by the
Ministry of Science and Technology of Taiwan R.O.C. under the project NSC 102-2115-M-009-009-MY3.
\bigskip


\begin{thebibliography}{}

\bibitem{Bha}
A. Bhattacharya, S. Friedland, U.N. Peled, On the first eigenvalue of bipartite graphs,
{\it Electron. J. Combin.} 15 (2008) R144.

\bibitem{b12}
R.A. Brualdi, {\it Introductory Combinatorics}(5th Eidtion),
Pearson Prentice Hall, 2012.

\bibitem{Hoff}
R.A. Brualdi, A.J. Hoffman, on the spectral radius of (0,1)- matrices,
{\it Linear Algebra Appl.} 65 (1985) 133-146.

\bibitem{cfksw10}
Yi-Fan Chen, Hung-Lin Fu, In-Jae Kim, Eryn Stehr, Brendon Watts, On the largest eigenvalues of bipartite graphs which are nearly complete, {\it Linear Algebra Appl.} 432 (2010) 606-614.

\bibitem{cls13a}
Yingying Chen, Huiqiu Lin, Jinlong Shu, Sharp upper bounds on the distance spectral
radius of a graph,
{\it Linear Algebra Appl.} 439 (2013) 2659-2666.


\bibitem{ctg13}
Shu-Yu Cui, Gui-Xian Tian, Jing-Jing Guo, A sharp upper bound on the signless Laplacian
spectral radius of graphs,
{\it Linear Algebra Appl.} 439 (2013) 2442-2447.


\bibitem{hy14}
Wenxi Hong, Lihua You, Spectral radius and signless Laplacian spectral
radius of strongly connected digraphs,
{\it Linear Algebra Appl.} 457 (2014) 93-113.

\bibitem{hw14}
Yu-Pei Huang, Chih-wen Weng, Spectral radius and average $2$-degree sequence of a graph,
{\it Discrete Math. Algorithms Appl.} 6 (2014), no. 2, 1450029.


\bibitem{lw13}
Chia-an Liu, Chih-wen Weng, Spectral radius and degree sequence of a graph,
{\it Linear Algebra Appl.} 438 (2013) 3511-3515.

\bibitem{Liu}Chia-an Liu, Chih-wen Weng, Spectral Radius of Bipartite Graphs,
{\it Linear Algebra Appl.} 474 (2015), 30-43.

\bibitem{s64}B. Schwarz, Rearrangement of square matries with nonnegative elements,
{\it Duke Math. J.} 31 (1964), 45-62.

\bibitem{sw04}
Jinlong Shu, Yarong Wu, Sharp upper bounds on the spectral radius of graphs,
{\it Linear Algebra Appl.} 377 (2004), 241-248


\bibitem{xz13}
Xing Duan, Bo Zhou, Sharp bounds on the spectral radius of
a nonnegative matrix,
{\it Linear Algebra Appl.} 439 (2013) 2961-2970.


\bibitem{xz14}
Rundan Xing, Bo Zhou, Sharp bounds for the spectral radius of nonnegative
matrices,
{\it Linear Algebra Appl.} 449 (2014) 194-209.


\bibitem{z14} Yitang Zhang, Bounded gaps between primes, {\it Annals of Mathematics} 179 (2014), 1121-1174.

\end{thebibliography}
\end{document}